\newtheorem{theorem}{Theorem}[section] 
\newtheorem{lemma}[theorem]{Lemma}     
\newtheorem{definition}[theorem]{Definition}
\newtheorem{corollary}[theorem]{Corollary}
\numberwithin{equation}{section}
\renewcommand{\H}{\mathbb H}
\newcommand{\N}{\mathbb N}
\newcommand{\R}{\mathbb R}
\newcommand{\norml}{\mathcal{N}}
\newcommand{\Fi}{\mathbf \Phi}
\newcommand{\Ffi}{\mathbf \phi}
\newcommand{\I}{\mathbf{Id}}
\newcommand{\x}{\mathbbm x}
\newcommand{\y}{\mathbbm y}
\newcommand{\z}{\mathbbm z}
\newcommand{\e}{\mathbbm e}
\newcommand{\h}{\mathbbm h}
\newcommand{\C}{\mathbbm c}
\newcommand{\0}{\mathbf 0}
\newcommand*{\ip}[1]{\left\langle{#1}\right\rangle} %nasz 'iloczyn skalarny'
\newcommand*{\norm}[1]{\left\lVert{#1}\right\rVert} %norma druga
\newcommand*{\conj}[1]{\overline{#1}} %sprzê¿enie
\newcommand*{\herm}[1]{#1^*} %sprzê¿enie hermitowskie
\renewcommand{\i}{\mathbf i}
\renewcommand{\j}{\mathbf j}
\renewcommand{\k}{\mathbf k}
\DeclareMathOperator*{\argmin}{arg\,min}
\newcommand{\supp}{\mathrm {supp}}
\begin{document}

\begin{center}
\Large
\textbf{Compressed sensing in the quaternion algebra}\\
\end{center}

\begin{center}
\begin{tabular}{cc}
	\textbf{Agnieszka Bade\'nska\textsuperscript{1}} & \textbf{{\L}ukasz B{\l}aszczyk\textsuperscript{1,2}} \\
	badenska@mini.pw.edu.pl & l.blaszczyk@mini.pw.edu.pl 
\medskip \\
	\textsuperscript{1} Faculty of Mathematics  & \textsuperscript{2} Institute of Radioelectronics \\ and Information Science & and Multimedia Technology \\
	Warsaw University of Technology & Warsaw University of Technology \\
	ul. Koszykowa 75 & ul. Nowowiejska 15/19 \\
	00-662 Warszawa, Poland & 00-665 Warszawa, Poland
\end{tabular}
\end{center}

\bigskip\noindent
\textbf{Keywords}: compressed sensing, quaternion, restricted isometry property, sparse signals.

\bigskip
\normalsize
\begin{abstract}
The~article concerns compressed sensing methods in the~quaternion algebra. We prove that it is possible to uniquely reconstruct -- by $\ell_1$-norm minimization -- a~sparse quaternion signal from a~limited number of its linear measurements, provided the~quaternion measurement matrix satisfies so-called restricted isometry property with a~sufficiently small constant. We also provide error estimates for the~reconstruction of a~non-sparse quaternion signal in the~noisy and noiseless cases.
\end{abstract}

\section{Introduction}

E.~Cand\'es et al. showed that -- in the~real or complex setting -- if a~measurement matrix satisfies so-called \textit{restricted isometry property} (RIP) with a~sufficiently small constant, then every sparse signal can be uniquely reconstructed from a~limited number of its linear measurements as a~solution of a~convex program of $\ell_1$-norm minimization (see~e.g.~\cite{cRIP,crt,ct} and \cite{introCS} for more references). Sparsity of the~signal is a~natural assumption -- most of well known signals have a~sparse representation in an~appropriate basis (e.g. wavelet representation of an~image). Moreover, if the~original signal was not sparse, the~same minimization procedure provides a~good sparse approximation of the~signal and the~procedure is stable in the~sense that the~error is bounded above by the~$\ell_1$-norm of the~difference between the~original signal and its best sparse approximation.

For a~certain time the~attention of the~researchers in the~theory of \textit{compressed sensing} has mostly been focused on real and complex signals. Over the~last decade there have been published results of numerical experiments suggesting that the~compressed sensing methods can be successfully applied also in the~quaternion algebra~\cite{barthelemy2015,hawes2014,l1minqs}, however, until recently there were no theoretical results that could explain the~success of these experiments. The~aim of our research is to develop theoretical background of the~compressed sensing theory in the~quaternion algebra. 

Our~first step towards this goal was proving that one can uniquely reconstruct a~sparse quaternion signal -- by $\ell_1$-norm minimization -- provided the~real measurement matrix satisfies the~RIP (for quaternion vectors) with a~sufficiently small constant (\cite[Corrolary~5.1]{bb}). This result can be directly applied since any~real matrix satisfying the~RIP for real vectors, satisfies the~RIP for quaternion vectors with the~same constant ({\cite[Lemma~3.2]{bb}}).
We also want to point out a~very interesting recent result of N.~Gomes, S.~Hartmann and U.~K\"ahler concerning the~quaternion Fourier matrices -- arising in colour representation of images. They showed that with high probability such matrices allow a~sparse reconstruction by means of the~$\ell_1$-minimization {\cite[Theorem~3.2]{ghk}}. Their proof, however, is straightforward and does not use the~notion of~RIP.

The generalization of compressed sensing to the~quaternion algebra would be significant due to their wide applications. Apart from the~classical applications (in quantum mechanics and for the~description of 3D solid body rotations), quaternions have also been used in 3D and 4D signal processing \cite{snopek2015}, in particular to represent colour images (e.g.~in the RGB or CMYK models). Due to the~extension of classical tools (like the~Fourier transform~\cite{ell2014}) to the~quaternion algebra it is possible to investigate colour images without need of treating each component separately~\cite{dubey2014,ell2014}. That is why quaternions have found numerous applications in image filtering, image enhancement, pattern recognition, edge detection and watermarking \cite{es,G1,khalil2012,P1,rzadkowski2015,T1,W1}. There has also been proposed a~dual-tree quaternion wavelet transform in a~multiscale analysis of geometric image features~\cite{ccb}. For this purpose an~alternative representation of quaternions is used -- through its magnitude (norm) and three phase angles: two of them encode phase shifts while the~third contains image texture information \cite{bulow2001}. 
In view of numerous articles presenting results of numerical experiments of quaternion signal processing and their possible applications, there is a~natural need of further thorough theoretical investigations in this field.

In this article we extend the~fundamental result of the~compressed sensing theory to the~quaternion case, namely we show that if a~quaternion measurement matrix satisfies the~RIP with a~sufficiently small constant, then it is possible to reconstruct sparse quaternion signals from a~small number of their measurements via $\ell_1$-norm minimization (Corollary~\ref{l1mincor}). We also estimate the~error of reconstruction of a~non-sparse signal from exact and noisy data (Theorem~\ref{l1minthm}). Note that these results not only generalize the~previous ones {\cite[Theorem~4.1, Corrolary~5.1]{bb}} but also improve them by decreasing the~error estimation's constants. This enhancement was possible due to using algebraic properties of quaternion Hermitian matrices (Lemma~\ref{herm-norm}) to derive characterization of the~restricted isometry constants (Lemma~\ref{RIPequiv}) analogous to the~real and complex case. Consequently, one can carefully follow steps of the~classical Cand\'es' proof~\cite{cRIP} with caution to the~non-commutativity of quaternion multiplication.

It is known that e.g. real Gaussian and Bernoulli random matrices, also partial Discrete Fourier Transform matrices satisfy the~RIP (with overwhelming probability)~\cite{introCS}, however, until recently there were no known examples of quaternion matrices satisfying this condition. It has been believed that quaternion Gaussian random matrices satisfy RIP and, therefore, they have been widely used in numerical experiments \cite{barthelemy2015,hawes2014,l1minqs} but there was a~lack of theoretical justification of this conviction. 
In the~subsequent article~\cite{bbRIP} we prove that this hypothesis is true, i.e. quaternion Gaussian matrices satisfy the~RIP, and we provide estimates on matrix sizes that guarantee the~RIP with overwhelming probability. This result, together with the~main results of this article (Theorem~\ref{l1minthm}, Corollary~\ref{l1mincor}), constitute the~theoretical foundation of the~classical compressed sensing methods in the~quaternion algebra.

The~article is organized as follows. First, we recall basic notation and facts concerning the~quaternion algebra with particular emphasis put on the~properties of Hermitian form and Hermitian matrices. The~third section is devoted to the~RIP and characterization of the~restricted isometry constants in terms of Hermitian matrix norm. The~fourth and fifth sections contain proofs of the~main results of the~article. In the~sixth section we present results of numerical experiments illustrating our results -- we may see, in particular, that the~rate of perfect reconstructions in the~quaternion case is higher than in the~real case experiment with the~same parameters. We conclude with a~short r\'esum\'e of the~obtained results and our further research perspectives.

\section{Algebra of quaternions}

Denote by~$\H$ the~algebra of quaternions
$$ q=a+b\i+c\j+d\k, \quad \textrm{where}\quad a,b,c,d\in\R, $$
endowed with the~standard norm
$$ |q|=\sqrt{q\conj{q}}=\sqrt{a^2+b^2+c^2+d^2}, $$
where $\conj{q}=a-b\i-c\j-d\k$ is the~conjugate of~$q$. 

Recall that multiplication is associative but in general not commutative in the~quaternion algebra and is defined by the~following rules
$$ \i^2=\j^2=\k^2=\i\j\k=-1 $$
and
$$ \i\j=-\j\i=\k, \quad \j\k=-\k\j=\i, \quad \k\i=-\i\k=\j. $$
Multiplication is distributive with respect to addition and has a~neutral element $1\in\H$, hence $\H$ forms a~ring, which is usually called a~noncommutative field.
We also have the~property that
$$ \conj{q\cdot w}=\conj{w}\cdot\conj{q} \quad\textrm{for any}\quad q,w\in\H. $$

In what follows we will interpret signals as vectors with quaternion coordinates, i.e. elements of $\H^n$. Algebraically $\H^n$ is a~module over the~ring $\H$, usually called the~quaternion vector space. We will also consider matrices with quaternion entries with usual multiplication rules.

For any matrix $\Fi\in\H^{m\times n}$ with quaternion entries by $\herm{\Fi}$ we denote the~adjoint matrix, i.e. $\herm{\Fi}=\conj{\Fi}^T$, where $T$ is the~transpose. The~same notation applies to quaternion vectors $\x\in\H^n$ which can be interpreted as one-column matrices $\x\in\H^{n\times1}$. Obviously $\herm{\left(\herm{\Fi}\right)}=\Fi$.

A~matrix $\Fi\in\H^{m\times n}$ defines a~$\H$-linear transformation $\Fi:\H^n\to\H^m$ (in terms of the~right quaternion vector space, i.e. considering the~right scalar multiplication) which acts by the~standard matrix-vector multiplication:
$$ \Fi(\x+\y)=\Fi\x+\Fi\y \quad \textrm{and} \quad \Fi(\x q)=(\Fi\x)q \quad \textrm{for any}\quad \x,\y\in\H^n,\; q\in\H. $$
We also have that
$$ \herm{(\Fi q)}=\conj{q}\herm{\Fi}, \quad \herm{(q\Fi)}=\herm{\Fi}\conj{q}, \quad \herm{(\Fi\x)}=\herm{\x}\herm{\Fi}, \quad \herm{(\Fi\mathbf{\Psi})}=\herm{\mathbf{\Psi}}\herm{\Fi} $$
for all $\Fi\in\H^{m\times n}$, $q\in\H$, $\x\in\H^n$, $\mathbf{\Psi}\in\H^{n\times p}$.

For any $n\in\N$ we introduce the~following Hermitian form $\ip{\cdot,\cdot}\colon\H^n\times\H^n\to\H$ with quaternion values:
$$ \ip{\x,\y}=\herm{\y}\x=\sum_{i=1}^{n}\conj{y_i}x_i, \quad\textrm{where}\quad \x=(x_1,\ldots,x_n)^T,\; \y=(y_1,\ldots,y_n)^T\in\H^n $$
and $T$ is the~transpose. Denote also
$$ \norm{\x}_2=\sqrt{\ip{\x,\x}}=\sqrt{\sum_{i=1}^{n}|x_i|^2}, \quad \textrm{for any }\x=(x_1,\ldots,x_n)^T\in\H^n. $$

It is straightforward calculation to verify that $\ip{\cdot,\cdot}$ satisfies the~following properties of an~inner product for all $\x,\y,\z\in\H^n$ and $q\in\H$.
	\begin{itemize}
		\item $\displaystyle \conj{\ip{\x,\y}}=\ip{\y,\x}$.
		\item $\displaystyle \ip{\x q,\y}=\ip{\x,\y}q$.
		\item $\displaystyle \ip{\x+\y,\z}=\ip{\x,\z}+\ip{\y,\z}$.
		\item $\displaystyle \ip{\x,\x}=\norm{\x}_2^2\geq0$ and $\displaystyle \norm{\x}_2^2=0 \quad \iff \quad \x=\0$.
	\end{itemize}
Hence $\norm{\cdot}_2$ satisfies the axioms of a~norm in~$\H^n$.

By carefully following the~classical steps of the~proof we also get the~Cauchy-Schwarz inequality (cf.\cite[Lemma 2.2]{bb}).
	$$ |\ip{\x,\y}|\leq\norm{\x}_2\cdot\norm{\y}_2 $$
for any $\x,\y\in\H^n$. 

Notice that for $\Fi\in\H^{m\times n}$ the~matrix $\herm{\Fi}$ defines the~adjoint $\H$-linear transformation since
$$ \ip{\x,\herm{\Fi}\y}=\herm{\left(\herm{\Fi}\y\right)}\x=\herm{\y}\Fi\x=\ip{\Fi\x,\y} \quad \textrm{for} \quad \x\in\H^n,\y\in\H^m. $$
Recall also that a~linear transformation (matrix) $\mathbf{\Psi}\in\H^{n\times n}$ is called \textit{Hermitian} if $\herm{\mathbf{\Psi}}=\mathbf{\Psi}$. Obviously $\herm{\Fi}\Fi$ is Hermitian for any $\Fi\in\H^{m\times n}$. 

In the~next section we will use the~following property of Hermitian matrices.
\begin{lemma}\label{herm-norm}
Suppose $\mathbf{\Psi}\in\H^{n\times n}$ is Hermitian. Then
$$ \norm{\mathbf{\Psi}}_{2\to2}=\max_{\x\in\H^n,\norm{\x}_2=1}\left|\ip{\mathbf{\Psi}\x,\x}\right|=\max_{\x\in\H^n\setminus\{\0\}}\frac{\left|\ip{\mathbf{\Psi}\x,\x}\right|}{\norm{\x}_2^2}, $$
where $\norm{\cdot}_{2\to2}$ is the~standard operator norm in the~right quaternion vector space $\H^n$ endowed with the~norm $\norm{\cdot}_2$, i.e.
$$ \norm{\mathbf{\Psi}}_{2\to2}=\max_{\x\in\H^n\setminus\{\0\}}\frac{\norm{\mathbf{\Psi}\x}_2}{\norm{\x}_2}= \max_{\x\in\H^n,\norm{\x}_2=1}\norm{\mathbf{\Psi}\x}_2. $$
\end{lemma}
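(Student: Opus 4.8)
The plan is to prove the two non-trivial equalities separately, writing $M:=\norm{\mathbf{\Psi}}_{2\to2}$ and $W:=\max_{\norm{\x}_2=1}\left|\ip{\mathbf{\Psi}\x,\x}\right|$; the equality of the two displayed formulas for $W$ (respectively for $M$) is immediate from homogeneity, since replacing $\x$ by $\x\cdot(1/\norm{\x}_2)$ rescales numerator and denominator by matching real factors. The inequality $W\leq M$ is the easy half: for any unit vector $\x$ the Cauchy--Schwarz inequality recalled in the excerpt gives $\left|\ip{\mathbf{\Psi}\x,\x}\right|\leq\norm{\mathbf{\Psi}\x}_2\norm{\x}_2\leq M$. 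The real content is the reverse bound $M\leq W$.

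For $M\leq W$ I would run the classical polarization argument, taking care of non-commutativity. First I would record two algebraic facts that hold precisely because $\mathbf{\Psi}$ is Hermitian. Using $\herm{\mathbf{\Psi}}=\mathbf{\Psi}$ together with the adjoint identity $\ip{\mathbf{\Psi}\x,\y}=\ip{\x,\mathbf{\Psi}\y}$ and conjugate symmetry, one gets $\ip{\mathbf{\Psi}\y,\x}=\conj{\ip{\mathbf{\Psi}\x,\y}}$; in particular $\ip{\mathbf{\Psi}\z,\z}=\conj{\ip{\mathbf{\Psi}\z,\z}}$, so the quantity $\ip{\mathbf{\Psi}\z,\z}$ is a \emph{real} number for every $\z\in\H^n$ (this is the quaternionic analogue of self-adjointness forcing a real quadratic form, and it is what makes $W$ meaningful). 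Expanding by additivity in both slots and subtracting yields the polarization identity
$$ \ip{\mathbf{\Psi}(\x+\y),\x+\y}-\ip{\mathbf{\Psi}(\x-\y),\x-\y}=2\ip{\mathbf{\Psi}\x,\y}+2\ip{\mathbf{\Psi}\y,\x}=4\,\req\ip{\mathbf{\Psi}\x,\y}, $$
the last step using the conjugate relation above. Since each term on the left is real and bounded by $W$ times the squared norm of its argument, the parallelogram law $\norm{\x+\y}_2^2+\norm{\x-\y}_2^2=2\norm{\x}_2^2+2\norm{\y}_2^2$ (which holds because $\norm{\cdot}_2$ is induced by the Hermitian form) gives
$$ \left|\req\ip{\mathbf{\Psi}\x,\y}\right|\leq\frac{W}{2}\left(\norm{\x}_2^2+\norm{\y}_2^2\right). $$

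To extract $\norm{\mathbf{\Psi}\x}_2$ I would, for a fixed unit vector $\x$ with $\mathbf{\Psi}\x\neq\0$, set $\y=\mathbf{\Psi}\x\cdot c$ with the \emph{real} scalar $c=1/\norm{\mathbf{\Psi}\x}_2$. Here the key subtlety is non-commutativity: choosing a real scalar guarantees $\conj{c}=c$, so $\ip{\mathbf{\Psi}\x,\y}=c\,\ip{\mathbf{\Psi}\x,\mathbf{\Psi}\x}=c\norm{\mathbf{\Psi}\x}_2^2=\norm{\mathbf{\Psi}\x}_2$, which is real and positive, while $\norm{\y}_2=1$. Substituting into the displayed inequality gives $\norm{\mathbf{\Psi}\x}_2=\req\ip{\mathbf{\Psi}\x,\y}\leq\tfrac{W}{2}(1+1)=W$, and the case $\mathbf{\Psi}\x=\0$ is trivial; taking the maximum over unit $\x$ yields $M\leq W$. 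I expect the main obstacle to be exactly this scalar bookkeeping — ensuring that every quaternion scalar introduced is real (or otherwise tracked through the conjugation rule $\ip{\x,\y q}=\conj{q}\ip{\x,\y}$) so that the polarization identity collapses to a statement about the real-valued form $\ip{\mathbf{\Psi}\cdot,\cdot}$, rather than acquiring spurious imaginary parts that the argument could not control.
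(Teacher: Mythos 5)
Your proof is correct, and it takes a genuinely different route from the paper's. The paper invokes the spectral theorem for quaternionic Hermitian matrices (Rodman, Theorem 5.3.6(c)): $\mathbf{\Psi}$ has real right eigenvalues and an orthonormal eigenbasis, and both $\norm{\mathbf{\Psi}}_{2\to2}$ and $\max_{\norm{\x}_2=1}\left|\ip{\mathbf{\Psi}\x,\x}\right|$ are computed to equal $\lambda_{\max}=\max_i|\lambda_i|$ by expanding $\x$ in that basis. You instead run the classical polarization--parallelogram argument: realness of $\ip{\mathbf{\Psi}\z,\z}$ from $\herm{\mathbf{\Psi}}=\mathbf{\Psi}$, the identity $\ip{\mathbf{\Psi}(\x+\y),\x+\y}-\ip{\mathbf{\Psi}(\x-\y),\x-\y}=4\,\req\ip{\mathbf{\Psi}\x,\y}$, and the choice $\y=\mathbf{\Psi}\x\cdot(1/\norm{\mathbf{\Psi}\x}_2)$ with a \emph{real} scalar so that non-commutativity never intrudes (your uses of $\ip{\x q,\y}=\ip{\x,\y}q$ and $\ip{\x,\y q}=\conj{q}\ip{\x,\y}$, the latter following from conjugate symmetry, are all legitimate). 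The trade-off: the paper's proof is shorter but leans on a nontrivial imported structural result, and as a by-product identifies both quantities with the largest absolute eigenvalue; your proof is self-contained — it needs only the Hermitian-form axioms and the Cauchy--Schwarz inequality already established in Section 2 — and it generalizes verbatim to bounded self-adjoint operators on quaternionic inner-product spaces of any dimension (with $\max$ replaced by $\sup$; in the finite-dimensional setting here, attainment of the maxima follows from compactness of the unit sphere, a point both proofs pass over silently). Either argument serves the paper's purpose, since Lemma~\ref{RIPequiv} only needs the stated equality of norms.
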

\begin{proof}
Recall that a~Hermitian matrix has real (right) eigenvalues~\cite{qalgebra}. Moreover, there exists an~orthonormal (in terms of the~$\H$-linear form $\ip{\cdot,\cdot}$) base of $\H^n$ consisting of eigenvectors $\x_i$ corresponding to eigenvalues $\lambda_i\in\R$, $i=1,\ldots,n$ (cf.{\cite[Theorem 5.3.6. (c)]{qalgebra}}), i.e.
$$ \mathbf{\Psi}\x_i=\x_i\lambda_i\quad \textrm{and}\quad \ip{\x_i,\x_j}=\herm{\x_j}\x_i=\delta_{i,j}\quad \textrm{for}\quad i,j=1,\ldots,n. $$
Denote $\lambda_{\max}=\max_i|\lambda_i|$. Then $\norm{\mathbf{\Psi}}_{2\to2}=\lambda_{\max}$. Indeed, for any $\x=\sum\limits_{i=1}^n\x_i\alpha_i\in\H^n$ with $\norm{\x}_2^2=\sum\limits_{i=1}^n|\alpha_i|^2=1$, since $\x_i$ are orthonormal, we have
$$ \norm{\mathbf{\Psi}\x}_2^2=\norm{\sum_{i=1}^n\mathbf{\Psi}\x_i\alpha_i}_2^2=\norm{\sum_{i=1}^n\x_i\lambda_i\alpha_i}_2^2=\sum_{i=1}^n\left|\lambda_i\alpha_i\right|^2 \leq\lambda_{\max}\underbrace{\sum_{i=1}^n\left|\alpha_i\right|^2}_{=1} $$
and for the~appropriate eigenvector $\x_i$ for which $|\lambda_i|=\lambda_{\max}$, 
$$ \norm{\mathbf{\Psi}\x_i}_2=\norm{\x_i\lambda_i}_2=\lambda_{\max}\norm{\x_i}_2=\lambda_{\max}. $$
On the~other hand, since $\lambda_i$ are real,
$$\aligned
   \ip{\mathbf{\Psi}\x,\x} &= \herm{\x}\mathbf{\Psi}\x=\herm{\left(\sum_{i=1}^n\x_i\alpha_i\right)}\left(\sum_{j=1}^n\mathbf{\Psi}\x_j\alpha_j\right)= \left(\sum_{i=1}^n\conj{\alpha_i}\herm{\x_i}\right)\left(\sum_{j=1}^n\x_j\lambda_j\alpha_j\right)\\
   & = \sum_{i=1}^n\conj{\alpha_i}\lambda_i\alpha_i \stackrel{\lambda_i\in\R}{=} \sum_{i=1}^n\lambda_i|\alpha_i|^2. 
 \endaligned $$
Hence 
$$ \left|\ip{\mathbf{\Psi}\x,\x}\right|\leq \lambda_{\max}\sum\limits_{i=1}^n|\alpha_i|^2=\lambda_{\max} $$ 
and -- again -- for the~appropriate eigenvector the~last two quantities are equal. The~result follows.
\end{proof}

In what follows we will consider $\norm{\cdot}_p$ norms for quaternion vectors $\x\in\H^n$ defined in the~standard way:
$$ \norm{\x}_{p}=\left(\sum_{i=1}^{n}|x_i|^p\right)^{1/p}, \quad \textrm{for}\quad p\in[1,\infty) $$
and
$$ \norm{\x}_{\infty}=\max_{1\leq i\leq n}|x_i|, $$
where $\x=(x_1,\ldots,x_n)^T$.
We will also apply the~usual notation for the~cardinality of the~support of~$\x$, i.e.
$$ \norm{\x}_0=\#\supp(\x), \quad\textrm{where}\quad \supp(\x)=\{i\in\{1,\ldots,n\}\colon x_i\neq0\}. $$

\section{Restricted Isometry Property}

Recall that we call a~vector (signal) $\x\in\H^n$ $s$-sparse if it has at most $s$ nonzero coordinates, i.e.
$$ \norm{\x}_0\leq s. $$
As it was mentioned in the~introduction, one of the~conditions which guarantees exact reconstruction of a~sparse real signal from a~few number of its linear measurements is that the~measurement matrix satisfies so-called restricted isometry property (RIP) with a~sufficiently small constant. The~notion of restricted isometry constants was introduced by Cand\`es and Tao in~\cite{ct}. Here we generalize it to quaternion signals.

\begin{definition}\label{RIP}
Let $s\in\N$ and $\Fi\in\H^{m\times n}$. We say that $\Fi$  satisfies the $s$-restricted isometry property (for quaternion vectors) with a~constant $\delta_s\geq0$ if
\begin{equation}\label{qRIP}
	 \left(1-\delta_s\right)\norm{\x}_2^2\leq\norm{\Fi\x}_2^2\leq\left(1+\delta_s\right)\norm{\x}_2^2 
\end{equation}
for all $s$-sparse quaternion vectors $\x\in\H^n$. The~smallest number $\delta_s\geq0$ with this property is called the $s$-restricted isometry constant. 
\end{definition}
Note that we can define $s$-restricted isometry constants for any matrix $\Fi\in\H^{m\times n}$ and any number $s\in\{1,\ldots,n\}$. It has been proved that if a~real matrix $\Fi\in\R^{m\times n}$ satisfies the~inequality~\eqref{qRIP} for real $s$-sparse vectors~$\x\in\R^n$, then it also satisfies it -- with the~same constant~$\delta_s$ -- for $s$-sparse quaternion vectors~$\x\in\H^n$ \cite[Lemma~3.2]{bb}. 

The~following lemma extends an~analogous result, known for real and complex matrices~\cite{introCS}, to the~quaternion case. As is it accustomed, for a~matrix $\Fi\in\H^{m\times n}$ and a~set of indices $S\subset\{1,\ldots,n\}$ with $\# S=s$ by $\Fi_S\in\H^{m\times s}$ we denote the~matrix consisting of columns of~$\Fi$ with indices in the~set~$S$.
\begin{lemma}\label{RIPequiv}
The $s$-restricted isometry constant $\delta_s$ of a~matrix $\Fi\in\H^{m\times n}$ equivalently equals
$$ \delta_s=\max_{S\subset\{1,\ldots,n\},\#S\leq s}\norm{\herm{\Fi_S}\Fi_S-\I}_{2\to2}. $$
\end{lemma}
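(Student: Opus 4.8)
The plan is to rewrite the two-sided estimate \eqref{qRIP} as a bound on a single Hermitian quadratic form and then invoke Lemma~\ref{herm-norm}. First I would observe that requiring \eqref{qRIP} for all $s$-sparse $\x$ is equivalent to
$$ \left|\norm{\Fi\x}_2^2-\norm{\x}_2^2\right|\leq\delta_s\norm{\x}_2^2 \quad\textrm{for all }s\textrm{-sparse }\x\in\H^n, $$
so that the smallest admissible constant is
$$ \delta_s=\sup_{\x\;s\textrm{-sparse},\,\x\neq\0}\frac{\left|\norm{\Fi\x}_2^2-\norm{\x}_2^2\right|}{\norm{\x}_2^2}. $$
Since every $s$-sparse vector is supported on some index set $S$ with $\#S\leq s$, and there are only finitely many such sets, I would split this supremum as a maximum over $S$ of the supremum over vectors supported on~$S$.

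Next, I would fix $S$ with $\#S\leq s$ and take $\x$ supported on $S$, writing $\x_S\in\H^{\#S}$ for its restriction to the coordinates in~$S$. By the definition of matrix-vector multiplication $\Fi\x=\Fi_S\x_S$, and using the adjoint identity $\ip{\Fi\x,\y}=\ip{\x,\herm{\Fi}\y}$ together with the fact that $\herm{\Fi_S}\Fi_S-\I$ is Hermitian, I would compute
$$ \norm{\Fi\x}_2^2-\norm{\x}_2^2=\ip{\Fi_S\x_S,\Fi_S\x_S}-\ip{\x_S,\x_S}=\ip{\left(\herm{\Fi_S}\Fi_S-\I\right)\x_S,\x_S}. $$
In particular the right-hand side is real, which is consistent with the real-valuedness of $\ip{\mathbf{\Psi}\x,\x}$ for Hermitian $\mathbf{\Psi}$ established in the proof of Lemma~\ref{herm-norm}.

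Finally, since $\herm{\Fi_S}\Fi_S-\I\in\H^{\#S\times\#S}$ is Hermitian, Lemma~\ref{herm-norm} applies directly and yields
$$ \sup_{\x\textrm{ supp.\ on }S,\,\x\neq\0}\frac{\left|\ip{\left(\herm{\Fi_S}\Fi_S-\I\right)\x_S,\x_S}\right|}{\norm{\x_S}_2^2}=\norm{\herm{\Fi_S}\Fi_S-\I}_{2\to2}, $$
the supremum being attained on the (compact) unit sphere of $\H^{\#S}$, so it is in fact a maximum. Taking the maximum over all $S$ with $\#S\leq s$ and combining with the expression for $\delta_s$ above gives the claimed equality. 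I expect the only delicate point to be the algebraic bookkeeping under noncommutativity---correctly passing $\Fi_S$ across the Hermitian form to produce $\herm{\Fi_S}\Fi_S$ and verifying Hermiticity of $\herm{\Fi_S}\Fi_S-\I$---since the genuine analytic content, namely the spectral description of the operator norm of a quaternion Hermitian matrix, is already isolated in Lemma~\ref{herm-norm}.
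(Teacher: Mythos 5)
Your proposal is correct and follows essentially the same route as the paper: rewrite the RIP inequality as a bound on the Hermitian quadratic form $\left|\ip{\left(\herm{\Fi_S}\Fi_S-\I\right)\x_S,\x_S}\right|$ and invoke Lemma~\ref{herm-norm}, then maximize over the finitely many support sets $S$. If anything, your write-up is slightly more explicit than the paper's about the reduction from $s$-sparse vectors in $\H^n$ to restricted vectors in $\H^{\#S}$ via $\Fi\x=\Fi_S\x_S$, which the paper leaves implicit.
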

\begin{proof}
We proceed as in~{\cite[Chapter 6]{introCS}}. Fix any $s\in\{1,\ldots,n\}$ and $S\subset\{1,\ldots,n\}$ with $\#S\leq s$. Notice that the~condition \eqref{qRIP} can be equivalently rewritten as
$$ \left|\norm{\Fi_S\x}_2^2-\norm{\x}_2^2\right|\leq\delta_s\norm{\x}_2^2 \quad \textrm{for all} \quad \x\in\H^s, $$
where $\delta_s$ is the $s$-restricted isometry constant of~$\Fi$.
The left hand side equals
$$ \left|\norm{\Fi_S\x}_2^2-\norm{\x}_2^2\right|\leq\delta_s\norm{\x}_2^2 = \left|\ip{\Fi_S\x,\Fi_S\x}-\ip{\x,\x}\right|=\left|\ip{\left(\herm{\Fi_S}\Fi_S-\I\right)\x,\x}\right|$$
and by the~Lemma~\ref{herm-norm}, since the~matrix $\herm{\Fi_S}\Fi_S-\I$ is Hermitian, we get that
$$ \max_{\x\in\H^s\setminus\{\0\}}\frac{\left|\ip{\left(\herm{\Fi_S}\Fi_S-\I\right)\x,\x}\right|}{\norm{\x}_2^2} = \norm{\herm{\Fi_S}\Fi_S-\I}_{2\to2}. $$
Arbitrary choice of $s$ and $S$ finishes the proof.
\end{proof}

The~next result is an~important tool in the~proof of Theorem~\ref{l1minthm}. Having the above equivalence and the~Cauchy-Schwarz inequality we are able to obtain for quaternion vectors the~same estimate as in the~real and complex case (cf. {\cite[Lemma~2.1]{cRIP}} and {\cite[Proposition~6.3]{introCS}}). 

\begin{lemma}\label{RIPip}
 Let $\delta_s$ be the~$s$-restricted isometry constant for a~matrix $\Fi\in\H^{m\times n}$ for $s\in\{1,\ldots,n\}$. For any pair of $\x,\y\in\H^n$ with disjoint supports and such that $\norm{\x}_0\leq s_1$ and $\norm{\y}_0\leq s_2$, where $s_1+s_2\leq n$, we have that 
$$  \left|\ip{\Fi\x,\Fi\y}\right|\leq \delta_{s_1+s_2}\norm{\x}_2\norm{\y}_2. $$
\end{lemma}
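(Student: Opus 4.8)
The plan is to mimic the classical parallelogram-law argument, but since the form $\ip{\cdot,\cdot}$ is quaternion-valued the polarization identity will only control the real part $\req\ip{\Fi\x,\Fi\y}$; so an extra rotation by a unit quaternion is needed to pass from the real part to the full modulus. First I would dispose of the trivial cases: if $\x=\0$, $\y=\0$, or $\ip{\Fi\x,\Fi\y}=0$ the inequality holds automatically, so I assume $\x,\y\neq\0$ and set $q=\ip{\Fi\x,\Fi\y}\neq0$ with the unit quaternion $u=q/|q|$.

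The core estimate is for unit vectors. Normalize $\tilde\x=\x/\norm{\x}_2$ and $\tilde\y=(\y u)/\norm{\y}_2$. Right-multiplication of $\y$ by the unit quaternion $u$ leaves the support unchanged (so $\tilde\y$ is still supported on $\supp(\y)$, disjoint from $\supp(\x)$, and $s_2$-sparse) and preserves the norm since $|y_i u|=|y_i|$; thus $\tilde\x,\tilde\y$ are unit vectors with disjoint supports, $s_1$- and $s_2$-sparse respectively. Because real scalars are central and self-conjugate, and using $\Fi(\y u)=(\Fi\y)u$ together with the identity $\ip{\x,\y r}=\conj r\,\ip{\x,\y}$ (which follows from $\ip{\x r,\y}=\ip{\x,\y}r$ and $\conj{\ip{\x,\y}}=\ip{\y,\x}$), I would compute $\ip{\Fi\tilde\x,\Fi\tilde\y}=\conj u\,q/(\norm{\x}_2\norm{\y}_2)=|q|/(\norm{\x}_2\norm{\y}_2)$, which is real and nonnegative. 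Hence it suffices to bound this real number.

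For the polarization step, disjointness of supports gives $\ip{\tilde\x,\tilde\y}=0$, whence $\norm{\tilde\x\pm\tilde\y}_2^2=2$, and both $\tilde\x+\tilde\y$ and $\tilde\x-\tilde\y$ are $(s_1+s_2)$-sparse. Expanding $\norm{\Fi(\tilde\x\pm\tilde\y)}_2^2$ via additivity and the conjugate symmetry $\conj{\ip{\x,\y}}=\ip{\y,\x}$ yields $\norm{\Fi(\tilde\x+\tilde\y)}_2^2-\norm{\Fi(\tilde\x-\tilde\y)}_2^2=4\req\ip{\Fi\tilde\x,\Fi\tilde\y}$. Applying the RIP bounds \eqref{qRIP} with $s=s_1+s_2$ (the upper bound to the first term, the lower bound to the second) gives $\req\ip{\Fi\tilde\x,\Fi\tilde\y}\leq\tfrac14\big(2(1+\delta_s)-2(1-\delta_s)\big)=\delta_{s_1+s_2}$. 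Since $\ip{\Fi\tilde\x,\Fi\tilde\y}$ is real and nonnegative it equals its real part, so $|q|/(\norm{\x}_2\norm{\y}_2)\leq\delta_{s_1+s_2}$, which is exactly the claimed inequality after clearing denominators.

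The only genuinely new point compared with the real or complex proof is the rotation in the second paragraph: I expect the main obstacle to be getting the handedness of the unit-quaternion multiplication right, so that $\ip{\Fi\x,\Fi(\y u)}$ collapses to the real scalar $|q|$ rather than to $|q|u^2$ or some other non-real quaternion. This forces right-multiplication by $u=q/|q|$ (not $\conj u$) together with careful use of the one-sided scalar rules for $\ip{\cdot,\cdot}$ and for $\Fi$. Note also that normalizing to unit vectors is essential: carrying $\x,\y$ of general norm through the polarization would only yield $\tfrac{\delta_s}{2}(\norm{\x}_2^2+\norm{\y}_2^2)$, which by AM--GM is weaker than the desired $\delta_s\norm{\x}_2\norm{\y}_2$. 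Everything else is a verbatim transcription of the classical parallelogram computation, with ``real part'' inserted wherever the complex proof would take an absolute value.
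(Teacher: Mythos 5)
Your proof is correct, but it takes a genuinely different route from the paper's. The paper restricts to $S=\supp(\x)\cup\supp(\y)$, uses orthogonality of the restricted vectors to write $\ip{\Fi\x,\Fi\y}=\ip{\left(\herm{\Fi_S}\Fi_S-\I\right)\x_{|S},\y_{|S}}$, and then applies the quaternion Cauchy--Schwarz inequality together with Lemma~\ref{RIPequiv}, which identifies $\delta_s$ with $\max_{\#S\leq s}\norm{\herm{\Fi_S}\Fi_S-\I}_{2\to2}$; that lemma in turn rests on the spectral theorem for quaternion Hermitian matrices (Lemma~\ref{herm-norm}). You instead argue directly from the definition~\eqref{qRIP} via the parallelogram identity, handling the quaternion-valued form by the rotation trick: replacing $\y$ by $\y u$ with $u=q/|q|$, $q=\ip{\Fi\x,\Fi\y}$, makes the inner product equal to the real number $|q|/(\norm{\x}_2\norm{\y}_2)$, after which polarization on the unit-normalized, still disjointly supported, $(s_1+s_2)$-sparse sums gives the bound $\delta_{s_1+s_2}$. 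Your steps check out: $\ip{\x,\y r}=\conj{r}\,\ip{\x,\y}$ is the correct one-sided scalar rule, $\conj{u}q=|q|$, right multiplication by a unit quaternion preserves supports and $\ell_2$-norms, and your remark that normalization is essential (otherwise one only gets $\tfrac{\delta}{2}(\norm{\x}_2^2+\norm{\y}_2^2)$) is accurate. In effect you have produced the quaternionic analogue of Cand\`es' original argument, with the complex phase factor $e^{i\theta}$ upgraded to a unit quaternion; this buys self-containedness, since it needs neither Lemma~\ref{herm-norm} nor Lemma~\ref{RIPequiv}, i.e.\ no quaternion spectral theory. What the paper's route buys is brevity and reusability: Lemma~\ref{RIPequiv} is established anyway as the standard characterization of restricted isometry constants, and once it is available the present lemma is a two-line consequence of Cauchy--Schwarz, with no case distinctions, normalization, or rotation required.
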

\begin{proof} 
In this proof we will use the~following notation: for any~vector $\x\in\H^n$ and a~set of indices $S\subset\{1,\ldots,n\}$ with $\# S=s$ by $\x_{|S}\in\H^s$ we denote the~vector of $\x$-coordinates with indices in~$S$.

Take any vectors $\x,\y\in\H^n$ satisfying the~assumptions of the~lemma and denote $S=\supp(\x)\cup\supp(\y)$. Obviously $\#S=s_1+s_2$. Since $\x$ and $\y$ have disjoint supports, they are orthogonal, i.e. $\ip{\x,\y}=\ip{\x_{|S},\y_{|S}}=0$. Using the~Cauchy-Schwarz inequality and Lemma~\ref{RIPequiv} we get that
$$\aligned
		 \left|\ip{\Fi\x,\Fi\y}\right| &= \left|\ip{\Fi_S\x_{|S},\Fi_S\y_{|S}}-\ip{\x_{|S},\y_{|S}}\right| = \left|\ip{\left(\herm{\Fi_S}\Fi_S-\I\right)\x_{|S},\y_{|S}}\right| \\
		 															 &\leq \norm{\herm{\Fi_S}\Fi_S-\I}_{2\to2}\norm{\x_{|S}}_2\norm{\y_{|S}}_2\leq \delta_{s_1+s_2}\norm{\x_{|S}}_2\norm{\y_{|S}}_2, 
\endaligned $$
which finishes the~proof since $\norm{\x_{|S}}_2=\norm{\x}_2$ and $\norm{\y_{|S}}_2=\norm{\y}_2$.
\end{proof}

\section{Stable reconstruction from noisy data}  

As we mentioned in the~introduction, our aim is to reconstruct a~quaternion signal from a~limited number of its linear measurements with quaternion coefficients. We will also assume the~presence of a~white noise with bounded $\ell_2$ quaternion norm. The~observables are, therefore, given by
$$ \y=\Fi\x+\e, \quad\textrm{where}\quad \x\in\H^n, \; \Fi\in\H^{m\times{n}}, \; \y\in\H^m \;\textrm{and}\; \e\in\H^n \textrm{ with } \norm{\e}_2\leq\eta $$
for some $m\leq n$ and $\eta\geq0$.

We will use the~following notation: for any $\h\in\H^n$ and a~set of indices $T\subset\{1,\ldots,n\}$, the~vector $\h_T\in\H^n$ is supported on~$T$ with entries
$$ \left(\h_T\right)_i=\left\{\begin{array}{cl} h_i & \textrm{if } i\in T, \\ 0 & \textrm{otherwise}, \end{array}\right. \quad \textrm{where} \quad \h=(h_1,\ldots,h_n)^T. $$
The~complement of $T\subset\{1,\ldots,n\}$ will be denoted by $T^c=\{1,\ldots,n\}\setminus{T}$ and the~symbol~$\x_s$ will be used for the~best $s$-sparse approximation of the~vector~$\x$.

The~following result is a~generalization of {\cite[Theorem~1.3]{cRIP}} and \cite[Theorem 4.1]{bb} to the~full quaternion case. It also improves the~error estimate's constants from~\cite[Theorem~4.1]{bb}.

\begin{theorem}\label{l1minthm}
Suppose that $\Fi\in\H^{m\times n}$ satisfies the~$2s$-restricted isometry property with a~constant $\delta_{2s}<\sqrt{2}-1$ and let $\eta\geq0$. Then, for any $\x\in\H^n$ and $\y=\Fi\x+\e$ with $\norm{\e}_2\leq\eta$, the~solution $\x^{\#}$ of the~problem
\begin{equation}\label{l1minproblem}
\argmin\limits_{\z\in\H^n} \norm{\z}_1 \quad\text{subject to}\quad\norm{\Fi\z-\y}_2\leq\eta
\end{equation}
satisfies 
\begin{equation} \label{main-ineq}
\norm{\x^{\#}-\x}_2 \leq \frac{C_0}{\sqrt{s}}\norm{\x-\x_s}_1 + C_1\eta
\end{equation} with constants 
$$ C_0 = 2\cdot \frac{1+\left(\sqrt{2}-1\right)\delta_{2s}}{1-\left(\sqrt{2}+1\right)\delta_{2s}} ,\quad C_1 = \frac{4\sqrt{1+\delta_{2s}}}{1-\left(\sqrt{2}+1\right)\delta_{2s}}, $$
where $\x_s$ denotes the~best $s$-sparse approximation of $\x$.
\end{theorem}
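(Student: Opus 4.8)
The plan is to follow the classical argument of Cand\`es \cite{cRIP} step by step, substituting the quaternion-valued Hermitian form $\ip{\cdot,\cdot}$ for the real inner product and invoking Lemma~\ref{RIPip} together with the Cauchy--Schwarz inequality in place of their real counterparts. The only genuinely new point is to verify that the non-commutativity of $\H$ never enters the estimates; this will be the case because every inequality is formulated for the \emph{real} quantities $|\ip{\cdot,\cdot}|$ and $\norm{\cdot}_2$, for which the enabling results of Section~3 have been established.

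First I would set $\h=\x^{\#}-\x$ and aim to bound $\norm{\h}_2$. Let $T_0$ be the set of indices of the $s$ largest-in-magnitude coordinates of $\x$, so that $\x_{T_0}=\x_s$ and $\norm{\x_{T_0^c}}_1=\norm{\x-\x_s}_1$; partition $T_0^c$ into successive blocks $T_1,T_2,\ldots$ of size $s$ obtained by listing the coordinates of $\h$ restricted to $T_0^c$ in order of decreasing magnitude, and write $T_{01}=T_0\cup T_1$. Two preliminary estimates are needed. From feasibility, $\norm{\Fi\x^{\#}-\y}_2\le\eta$ and $\norm{\Fi\x-\y}_2=\norm{\e}_2\le\eta$, so the triangle inequality gives $\norm{\Fi\h}_2\le2\eta$. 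From $\ell_1$-minimality, $\norm{\x}_1\ge\norm{\x+\h}_1$; splitting this along $T_0$ and $T_0^c$ and applying the triangle inequality for $\norm{\cdot}_1$ yields the cone inequality $\norm{\h_{T_0^c}}_1\le\norm{\h_{T_0}}_1+2\norm{\x-\x_s}_1$.

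Next I would record the tail bound. Because every magnitude on $T_j$ is dominated by the average magnitude on $T_{j-1}$, one gets $\norm{\h_{T_j}}_2\le s^{-1/2}\norm{\h_{T_{j-1}}}_1$ for $j\ge2$, hence $\sum_{j\ge2}\norm{\h_{T_j}}_2\le s^{-1/2}\norm{\h_{T_0^c}}_1$. Combined with the cone inequality and $\norm{\h_{T_0}}_1\le\sqrt{s}\,\norm{\h_{T_0}}_2\le\sqrt{s}\,\norm{\h_{T_{01}}}_2$, this becomes $\sum_{j\ge2}\norm{\h_{T_j}}_2\le\norm{\h_{T_{01}}}_2+2s^{-1/2}\norm{\x-\x_s}_1$, a quantity that also controls $\norm{\h_{T_{01}^c}}_2$.

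The heart of the proof is the estimate on $\norm{\h_{T_{01}}}_2$. Writing $\h=\h_{T_{01}}+\sum_{j\ge2}\h_{T_j}$ and using additivity of the form in both slots, the \emph{real} quantity $\norm{\Fi\h_{T_{01}}}_2^2$ equals $\ip{\Fi\h_{T_{01}},\Fi\h}-\sum_{j\ge2}\ip{\Fi\h_{T_{01}},\Fi\h_{T_j}}$. I would bound the first term by Cauchy--Schwarz, the RIP upper bound, and $\norm{\Fi\h}_2\le2\eta$, and each cross term $\ip{\Fi\h_{T_i},\Fi\h_{T_j}}$ with $i\in\{0,1\}$, $j\ge2$ by Lemma~\ref{RIPip}, since $\h_{T_i}$ and $\h_{T_j}$ have disjoint supports. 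Using $\norm{\h_{T_0}}_2+\norm{\h_{T_1}}_2\le\sqrt2\,\norm{\h_{T_{01}}}_2$ and the RIP lower bound $(1-\delta_{2s})\norm{\h_{T_{01}}}_2^2\le\norm{\Fi\h_{T_{01}}}_2^2$, then cancelling one factor of $\norm{\h_{T_{01}}}_2$ and inserting the tail bound, I would arrive at $(1-(\sqrt2+1)\delta_{2s})\norm{\h_{T_{01}}}_2\le2\sqrt{1+\delta_{2s}}\,\eta+2\sqrt2\,\delta_{2s}\,s^{-1/2}\norm{\x-\x_s}_1$. The hypothesis $\delta_{2s}<\sqrt2-1$ is exactly what keeps the left-hand coefficient positive, so one may divide through; finally $\norm{\h}_2\le\norm{\h_{T_{01}}}_2+\norm{\h_{T_{01}^c}}_2\le2\norm{\h_{T_{01}}}_2+2s^{-1/2}\norm{\x-\x_s}_1$ produces \eqref{main-ineq} with precisely the stated $C_0$ and $C_1$ after collecting terms. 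The main obstacle is the bookkeeping of the non-commutative expansion of $\norm{\Fi\h_{T_{01}}}_2^2$, so that only absolute values (equivalently, real parts) of the cross terms are ever used; this is exactly what Lemma~\ref{RIPip} and the quaternion Cauchy--Schwarz inequality are designed to handle.
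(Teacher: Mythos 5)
Your proposal is correct and follows essentially the same route as the paper's proof: the same decomposition into blocks $T_0,T_1,T_2,\ldots$, the same cone and tail inequalities, the same expansion of $\norm{\Fi\h_{T_0\cup T_1}}_2^2$ handled via the quaternion Cauchy--Schwarz inequality and Lemma~\ref{RIPip}, and your collected inequality $\left(1-(\sqrt2+1)\delta_{2s}\right)\norm{\h_{T_0\cup T_1}}_2\leq 2\sqrt{1+\delta_{2s}}\,\eta+2\sqrt2\,\delta_{2s}\,s^{-1/2}\norm{\x-\x_s}_1$ is exactly the paper's estimate \eqref{normhT01} combined with \eqref{normhT0c}, yielding the identical constants $C_0$ and $C_1$. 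The only difference is cosmetic bookkeeping (you absorb the factor $1-\delta_{2s}$ at once rather than via the paper's intermediate parameters $\alpha$ and $\beta$), so there is nothing to correct.
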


\begin{proof}
Denote 
$$ \h=\x^{\#}-\x $$
and decompose $\h$ into a~sum of vectors $\h_{T_0},\h_{T_1},\h_{T_2}\ldots$ in the~following way: let $T_0$ be the~set of indices of $\x$ coordinates with the~biggest quaternion norms (hence $\x_s=\x_{T_0}$); $T_1$ is the~set of indices of $\h_{T_0^c}$ coordinates with the~biggest norms, $T_1$ is the~set of indices of $\h_{\left(T_0\cup T_1\right)^c}$ coordinates with the~biggest norms, etc. Then obviously all $\h_{T_j}$ are $s$-sparse and have disjoint supports.
In what follows we will separately estimate norms $\norm{\h_{T_0\cup T_1}}_2$ and~$\norm{\h_{\left(T_0\cup T_1\right)^c}}_2$.

Notice that for $j\geq2$ we have that
$$ \norm{\h_{T_j}}_2^2=\sum_{i\in T_j}|h_i|^2\leq \sum_{i\in T_j}\norm{\h_{T_j}}_{\infty}^2\leq s\norm{\h_{T_j}}_{\infty}^2, $$
where $h_i$ are the~coordinates of~$\h$ and $\norm{\h_{T_j}}_{\infty}=\max\limits_{i\in{T_j}}|h_i|$ (the~last $T_j$ may have less than~$s$ nonzero coordinates).
Moreover, since all non-zero coordinates of~$\h_{T_{j-1}}$ have norms not smaller than non-zero coordinates of $\h_{T_j}$,
$$ \norm{\h_{T_j}}_{\infty}\leq\frac{1}{s}\sum_{i\in{T_{j-1}}}|h_i|=\frac{1}{s}\norm{\h_{T_{j-1}}}_1. $$
Hence, for $j\geq2$ we get that
$$ 	\norm{\h_{T_j}}_2\leq \sqrt{s}\norm{\h_{T_j}}_{\infty}\leq \frac{1}{\sqrt{s}}\norm{\h_{T_{j-1}}}_1, $$
which implies
\begin{equation}\label{sumnormhTj}
 \sum_{j\geq2}\norm{\h_{T_j}}_2\leq \frac{1}{\sqrt{s}}\sum_{j\geq1}\norm{\h_{T_j}}_1\leq \frac{1}{\sqrt{s}}\norm{\h_{T_0^c}}_1.
\end{equation}
Finally 
\begin{equation}\label{normhT01c}
	\norm{\h_{\left(T_0\cup T_1\right)^c}}_2=\norm{\sum_{j\geq2}\h_{T_j}}_2\leq \sum_{j\geq2}\norm{\h_{T_j}}_2\leq \frac{1}{\sqrt{s}}\norm{\h_{T_0^c}}_1.
\end{equation}

Observe that $\norm{\h_{T_0^c}}_1$ can not be to large. Indeed, since $\norm{\x^{\#}}_1=\norm{\x+\h}_1$ is minimal
$$ \norm{\x}_1\geq\norm{\x+\h}_1= \norm{\x_{T_0}+\h_{T_0}}_1+ \norm{\x_{T_0^c}+\h_{T_0^c}}_1 \geq \norm{\x_{T_0}}_1-\norm{\h_{T_0}}_1-\norm{\x_{T_0^c}}_1+\norm{\h_{T_0^c}}_1, $$
hence
$$ \norm{\x_{T_0^c}}_1=\norm{\x}_1-\norm{\x_{T_0}}_1\geq -\norm{\h_{T_0}}_1-\norm{\x_{T_0^c}}_1+\norm{\h_{T_0^c}}_1 $$
and therefore
\begin{equation}\label{normhT0c}
	\norm{\h_{T_0^c}}_1\leq \norm{\h_{T_0}}_1+2\norm{\x_{T_0^c}}_1.
\end{equation}

Now, the Cauchy-Schwarz inequality immediately implies that $\norm{\h_{T_0}}_1\leq\sqrt{s}\norm{\h_{T_0}}_2$.
From this, (\ref{normhT01c}) and (\ref{normhT0c}) we conclude that
\begin{equation}\label{ineq1} 
	\norm{\h_{\left(T_0\cup T_1\right)^c}}_2\leq \frac{1}{\sqrt{s}}\norm{\h_{T_0^c}}_1 \leq \frac{1}{\sqrt{s}}\norm{\h_{T_0}}_1+\frac{2}{\sqrt{s}}\norm{\x_{T_0^c}}_1\leq 
	\norm{\h_{T_0}}_2+2\epsilon,
\end{equation}
where $\epsilon=\frac{1}{\sqrt{s}}\norm{\x_{T_0^c}}_1=\frac{1}{\sqrt{s}}\norm{\x-\x_s}_1$. This is the~first ingredient of the~final estimate.

\medskip
Now, we are going to estimate the~remaining component, i.e. $\norm{\h_{T_0\cup T_1}}_2$.
Using $\H$-linearity of $\ip{\cdot,\cdot}$, we get that
$$\aligned
	\norm{\Fi\h_{T_0\cup T_1}}_2^2&=\ip{\Fi\h_{T_0\cup T_1},\Fi\h_{T_0\cup T_1}}=\ip{\Fi\h_{T_0\cup T_1},\Fi\h}-\sum_{j\geq2}\ip{\Fi\h_{T_0\cup T_1},\Fi\h_{T_j}}\\
		&=\ip{\Fi\h_{T_0\cup T_1},\Fi\h}-\sum_{j\geq2}\ip{\Fi\h_{T_0},\Fi\h_{T_j}}-\sum_{j\geq2}\ip{\Fi\h_{T_1},\Fi\h_{T_j}}.
\endaligned$$
Estimate of the~norm of the~first element follows from the~Cauchy-Schwarz inequality in the~quaternion case, RIP and the~following simple observation
$$ 	\norm{\Fi\left(\x^{\#}-\x\right)}_2\leq  \norm{\Fi\x^{\#}-\y}_2+\norm{\Fi\x-\y}_2\leq 2\eta, $$
which follows from the~fact that $\x^{\#}$ is the~minimizer of~(\ref{l1minproblem}) and $\x$ is feasible. We get therefore that
\begin{equation}\label{FihT01Fih}
	\left|\ip{\Fi\h_{T_0\cup T_1},\Fi\h}\right|\leq\norm{\Fi\h_{T_0\cup T_1}}_2\cdot\norm{\Fi\h}_2 \leq \sqrt{1+\delta_{2s}}\norm{\h_{T_0\cup T_1}}_2\cdot2\eta.
\end{equation}
For the~remaining terms recall that $\h_{T_j}$, $j\geq0$, are $s$-sparse with pairwise disjoint supports and apply Lemma~\ref{RIPip}
$$ \left|\ip{\Fi\h_{T_i},\Fi\h_{T_j}}\right|\leq\delta_{2s}\cdot\norm{\h_{T_i}}_2\cdot\norm{\h_{T_j}}_2 \quad\text{for}\quad i=0,1 \quad\text{and}\quad j\geq2, $$

Since $T_0$ and $T_1$ are disjoint, $\norm{\h_{T_0\cup T_1}}_2^2=\norm{\h_{T_0}}_2^2+\norm{\h_{T_1}}_2^2$ and therefore  
$\norm{\h_{T_0}}_2+\norm{\h_{T_1}}_2\leq \sqrt2\norm{\h_{T_0\cup T_1}}_2$. Hence, using the~RIP, (\ref{FihT01Fih}) and (\ref{sumnormhTj}), 
$$\aligned 
	\left(1-\delta_{2s}\right)\norm{\h_{T_0\cup T_1}}_2^2 &\leq \norm{\Fi\h_{T_0\cup T_1}}_2^2 \\
	&\leq \sqrt{1+\delta_{2s}}\norm{\h_{T_0\cup T_1}}_2\cdot2\eta+ \delta_{2s}\cdot\left(\norm{\h_{T_0}}_2+\norm{\h_{T_1}}_2\right)\sum_{j\geq2}\norm{\h_{T_j}}_2\\
	&\leq \left(2\sqrt{1+\delta_{2s}}\cdot\eta+\frac{\sqrt{2}\,\delta_{2s}}{\sqrt{s}}\norm{\h_{T_0^c}}_1\right)\norm{\h_{T_0\cup T_1}}_2,
\endaligned$$
which implies that
\begin{equation}\label{normhT01}	
	\norm{\h_{T_0\cup T_1}}_2\leq \frac{2\sqrt{1+\delta_{2s}}}{1-\delta_{2s}}\cdot\eta+\frac{\sqrt{2}\,\delta_{2s}}{1-\delta_{2s}}\cdot\frac{\norm{\h_{T_0^c}}_1}{\sqrt{s}}. 
\end{equation}
This, together with (\ref{normhT0c}), gives the~following estimate 
$$ \norm{\h_{T_0\cup T_1}}_2\leq \alpha\cdot\eta+\beta\cdot\frac{\norm{\h_{T_0}}_1}{\sqrt{s}}+2\beta\cdot\frac{\norm{\x_{T_0^c}}_1}{\sqrt{s}}, $$
where
$$ \alpha=\frac{2\sqrt{1+\delta_{2s}}}{1-\delta_{2s}}\qquad \textrm{and}\qquad \beta=\frac{\sqrt{2}\,\delta_{2s}}{1-\delta_{2s}}. $$
Since $\norm{\h_{T_0}}_1\leq\sqrt{s}\norm{\h_{T_0}}_2\leq\sqrt{s}\norm{\h_{T_0\cup T_1}}_2$, therefore
$$ \norm{\h_{T_0\cup T_1}}_2\leq \alpha\cdot\eta+\beta\cdot\norm{\h_{T_0\cup T_1}}_2+2\beta\cdot\epsilon,$$
where recall that $\epsilon=\frac{1}{\sqrt{s}}\norm{\h_{T_0^c}}_2$, hence
\begin{equation}\label{ineq2}
	\norm{\h_{T_0\cup T_1}}_2\leq\frac{1}{1-\beta}\left(\alpha\cdot\eta+2\beta\cdot\epsilon\right),
\end{equation}
as long as $\beta<1$ which is equivalent to $\delta_{2s}<\sqrt{2}-1$. 

Finally, (\ref{ineq1}) and (\ref{ineq2}) imply the main result
$$\aligned
	\norm{\h}_2&\leq\norm{\h_{T_0\cup T_1}}_2+\norm{\h_{(T_0\cup T_1)^c}}_2\leq \norm{\h_{T_0\cup T_1}}_2+\norm{\h_{T_0}}_2+2\epsilon \\
		&\leq 2\norm{\h_{T_0\cup T_1}}_2+2\epsilon\leq \frac{2\alpha}{1-\beta}\cdot\eta+\left(\frac{4\beta}{1-\beta}+2\right)\cdot\epsilon
\endaligned $$
and the constants in the~statement of the~theorem equal
$$ C_0=\frac{4\beta}{1-\beta}+2=2\frac{1+\beta}{1-\beta}=2\frac{1+\left(\sqrt{2}-1\right)\delta_{2s}}{1-\left(\sqrt{2}+1\right)\delta_{2s}} \quad \textrm{and}\quad 
		C_1=\frac{2\alpha}{1-\beta}=\frac{4\sqrt{1+\delta_{2s}}}{1-\left(\sqrt{2}+1\right)\delta_{2s}}.$$
\end{proof}

\section{Stable reconstruction from exact data} 

In this section we will assume that our observables are exact, i.e.
$$ \y=\Fi\x, \quad\textrm{where}\quad \x\in\H^n, \; \Fi\in\H^{m\times{n}}, \; \y\in\H^m.$$
The~undermentioned result is a~natural corollary of~Theorem~\ref{l1minthm} for $\eta=0$.

\begin{corollary}\label{l1mincor}
	Let $\Fi\in\H^{m\times n}$ satisfies the~$2s$-restricted isometry property with a~constant $\delta_{2s}<\sqrt{2}-1$. Then for any $\x\in\H^n$ and $\y=\Fi\x\in\H^m$, the~solution $\x^{\#}$ of the~problem
\begin{equation}\label{l1minexact}
\argmin\limits_{\z\in\H^n} \norm{\z}_1 \quad\text{subject to}\quad \Fi\z=\y
\end{equation}
satisfies 
\begin{equation} \label{exact-ineq1}
\norm{\x^{\#}-\x}_1 \leq C_0\norm{\x-\x_s}_1
\end{equation} and
\begin{equation} \label{exact-ineq2}
\norm{\x^{\#}-\x}_2 \leq \frac{C_0}{\sqrt{s}}\norm{\x-\x_s}_1 
\end{equation} 
with constant $C_0$ as in the~Theorem~\ref{l1minthm}. In particular, if $\x$ is $s$-sparse and there is no noise, then the~reconstruction by $\ell_1$-norm minimization is exact.
\end{corollary}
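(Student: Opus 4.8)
The plan is to derive both inequalities directly from Theorem~\ref{l1minthm} and the intermediate estimates already established in its proof, specializing everything to the noiseless case $\eta=0$. The key observation is that when $\eta=0$ the feasibility constraint $\norm{\Fi\z-\y}_2\leq\eta$ in~\eqref{l1minproblem} collapses to the equality $\Fi\z=\y$, so the minimizer $\x^{\#}$ of~\eqref{l1minexact} is exactly the minimizer treated in Theorem~\ref{l1minthm}. Consequently inequality~\eqref{exact-ineq2} is immediate: setting $\eta=0$ in~\eqref{main-ineq} gives $\norm{\x^{\#}-\x}_2\leq\frac{C_0}{\sqrt s}\norm{\x-\x_s}_1+C_1\cdot 0$, which is precisely~\eqref{exact-ineq2}. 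No new argument is needed for this part.

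The only real content is the $\ell_1$-estimate~\eqref{exact-ineq1}, which is \emph{not} a literal specialization of~\eqref{main-ineq} and must be assembled from the bounds used inside the proof of Theorem~\ref{l1minthm}. I would reuse the same vector $\h=\x^{\#}-\x$ and its decomposition $\h=\sum_j\h_{T_j}$. Splitting $\norm{\h}_1=\norm{\h_{T_0}}_1+\norm{\h_{T_0^c}}_1$ and inserting the cone bound~\eqref{normhT0c}, namely $\norm{\h_{T_0^c}}_1\leq\norm{\h_{T_0}}_1+2\norm{\x_{T_0^c}}_1$, yields $\norm{\h}_1\leq 2\norm{\h_{T_0}}_1+2\norm{\x_{T_0^c}}_1$. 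The crucial step is to control $\norm{\h_{T_0}}_1$: by the Cauchy--Schwarz inequality $\norm{\h_{T_0}}_1\leq\sqrt s\,\norm{\h_{T_0}}_2\leq\sqrt s\,\norm{\h_{T_0\cup T_1}}_2$, and~\eqref{ineq2} with $\eta=0$ gives $\norm{\h_{T_0\cup T_1}}_2\leq\frac{2\beta}{1-\beta}\,\epsilon$ with $\epsilon=\frac{1}{\sqrt s}\norm{\x_{T_0^c}}_1$. Hence $\norm{\h_{T_0}}_1\leq\frac{2\beta}{1-\beta}\norm{\x_{T_0^c}}_1$, and substituting back produces
$$ \norm{\h}_1\leq\left(\frac{4\beta}{1-\beta}+2\right)\norm{\x_{T_0^c}}_1=C_0\,\norm{\x-\x_s}_1, $$
using $\norm{\x_{T_0^c}}_1=\norm{\x-\x_s}_1$ together with the identity $C_0=\frac{4\beta}{1-\beta}+2$ from the theorem. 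This is exactly~\eqref{exact-ineq1}.

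Finally, the closing assertion on exact sparse recovery follows by taking $\x$ to be $s$-sparse, so that $\x_s=\x$ and $\norm{\x-\x_s}_1=0$; then either~\eqref{exact-ineq1} or~\eqref{exact-ineq2} forces $\norm{\x^{\#}-\x}=0$, i.e. $\x^{\#}=\x$. I expect no genuine obstacle here, since every ingredient already holds verbatim in the quaternion setting and with $\eta=0$; the only subtlety worth flagging is that~\eqref{exact-ineq1} cannot simply be read off from~\eqref{main-ineq} and instead requires recombining~\eqref{normhT0c} and~\eqref{ineq2}, which is why I would single it out rather than dismiss the corollary as a one-line consequence of the theorem.
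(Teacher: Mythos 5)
Your proposal is correct and follows essentially the same route as the paper: both obtain \eqref{exact-ineq2} by setting $\eta=0$ in \eqref{main-ineq}, and both derive \eqref{exact-ineq1} by reusing the decomposition $\h=\sum_j\h_{T_j}$, the cone bound \eqref{normhT0c}, and the estimate $\norm{\h_{T_0}}_1\leq\sqrt{s}\,\norm{\h_{T_0\cup T_1}}_2$ from the proof of Theorem~\ref{l1minthm}. The only cosmetic difference is that you invoke the already-bootstrapped inequality \eqref{ineq2}, whereas the paper starts from \eqref{normhT01} and performs the absorption at the $\ell_1$ level (solving $\norm{\h_{T_0^c}}_1\leq\beta\norm{\h_{T_0^c}}_1+2\norm{\x_{T_0^c}}_1$ for $\norm{\h_{T_0^c}}_1$); both variants yield the identical constant $C_0=2\,\frac{1+\beta}{1-\beta}$.
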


\begin{proof}
	Inequality (\ref{exact-ineq2}) follows directly from Theorem~\ref{l1minthm} for $\eta=0$. The~result for sparse signals is obvious since in this case $\x=\x_s$. We only need to prove~(\ref{exact-ineq1}).
	
	We will use the same notation as in the~proof of Theorem~\ref{l1minthm}. Recall that
	$$ \norm{\h_{T_0}}_1\leq\sqrt{s}\norm{\h_{T_0}}_2\leq\sqrt{s}\norm{\h_{T_0\cup T_1}}_2 $$
	which together with (\ref{normhT01}) for $\eta=0$ implies
$$ \norm{\h_{T_0}}_1\leq \frac{\sqrt{2}\,\delta_{2s}}{1-\delta_{2s}}\cdot\norm{\h_{T_0^c}}_1.  $$	
Using this and (\ref{normhT0c}), deonoting again $\beta=\frac{\sqrt{2}\,\delta_{2s}}{1-\delta_{2s}}$, we get that
$$ \norm{\h_{T_0^c}}_1\leq \beta\norm{\h_{T_0^c}}_1+2\norm{\x_{T_0^c}}_1, \qquad \textrm{hence}\qquad \norm{\h_{T_0^c}}_1\leq \frac{2}{1-\beta}\norm{\x_{T_0^c}}_1. $$
Finally, we obtain the~following estimate on the~$\ell_1$ norm of the~vector $\h=\x^{\#}-\x$
$$ \norm{\h}_1=\norm{\h_{T_0}}_1+\norm{\h_{T_0^c}}_1\leq (1+\beta)\norm{\h_{T_0^c}}_1\leq \underbrace{2\frac{1+\beta}{1-\beta}}_{=C_0}\norm{\x_{T_0^c}}_1, $$
which finishes the proof.
\end{proof}

We conjecture that the~ requirement $\delta_{2s}<\sqrt{2}-1$ is not optimal -- there are known refinements of this condition for real signals (see e.g. {\cite[Chapter~6]{introCS}} for references). On the~other hand, the~authors of~\cite{cz} constructed examples of $s$-sparse real signals which can not be uniquely reconstructed via $\ell_1$-norm minimization for $\delta_s>\frac13$. This gives an~obvious upper bound for~$\delta_{s}$ also for the~general quaternion case.

\section{Numerical experiment}

In~\cite{bb} we presented results of numerical experiments of sparse quaternion vector~$\x$ reconstruction (by $\ell_1$-norm minimization) from its linear measurements $\y=\Fi\x$ for the~case of  real-valued measurement matrix~$\Fi$. Those experiments were inspired by the~articles~\cite{barthelemy2015,hawes2014,l1minqs} and involved expressing the $\ell_1$ quaternion norm minimization problem in terms of the~second-order cone programming (SOCP).

In view of the~main results of this paper (Theorem~\ref{l1minthm}, Corollary~\ref{l1mincor}) and having in mind that quaternion Gaussian random matrices satisfy (with overwhelming probability) the restricted isometry property~\cite{bbRIP}, we performed similar experiments for the~case of~quaternion matrix~$\Fi\in\H^{m\times n}$ -- as in~\cite{l1minqs}. By a~quaternion Gaussian random  matrix we mean a~matrix $\Fi=(\Ffi_{ij})\in\H^{m\times n}$ whose entries $\Ffi_{ij}$ are independent quaternion Gaussian random variables with distribution denoted by $\norml_{\H}\left(0,\sigma^2\right)$, i.e. 
$$ \Ffi_{ij} = \Ffi_{\mathbf{r},ij} + \Ffi_{\i,ij}\i + \Ffi_{\j,ij}\j + \Ffi_{\k,ij}\k, $$ 
where 
$$ \Ffi_{e,ij}\sim\norml\left(0,\frac{\sigma^2}{4}\right), \; e\in\{\mathbf{r},\i,\j,\k\}, \quad\text{and}\quad \Ffi_{e,ij} \text{ are pairwise independent.}$$
In particular, each $\Ffi_{ij}$ has independent components, which are real Gaussian random variables.
In what follows, we consider only the case of noiseless measurements, i.e. we solve the problem~\eqref{l1minexact}. 

Recall after~\cite{l1minqs} that problem~\eqref{l1minexact} is equivalent to 
\begin{align} \label{eq:4_4_02}
	\argmin\limits_{t\in\R_+} t\quad\text{subject to}\quad \y=\Fi\z,\,\norm{\z}_1\leq t.
\end{align} 
We decompose vectors $\y\in\H^m$ and $\z\in\H^n$ into real vectors representing their~real parts and components of their~imaginary parts
\begin{align*}
	\y = \y_{\mathbf{r}}+\y_{\i}\i + \y_{\j}\j + \y_{\k}\k,\qquad  \z = \z_{\mathbf{r}}+\z_{\i}\i + \z_{\j}\j + \z_{\k}\k,
\end{align*} 
where $\y_{\mathbf{r}},\y_{\i},\y_{\j},\y_{\k}\in\R^m$, $\z_{\mathbf{r}},\z_{\i},\z_{\j},\z_{\k}\in\R^n$. Denote 
\begin{align*}
	\z_{\mathbf{r}} = (z_{\mathbf{r},1},\ldots,z_{\mathbf{r},n})^T, \,\,\,
	\z_{\i} = (z_{\i,1},\ldots,z_{\i,n})^T, \,\,\,
	\z_{\j} = (z_{\j,1},\ldots,z_{\j,n})^T, \,\,\,
	\z_{\k} = (z_{\k,1},\ldots,z_{\k,n})^T.
\end{align*} 
and let $\Ffi_k\in\H^m$, $k\in\{1,\ldots,n\}$, be the~$k$-th column of the~matrix $\Fi$. Again decompose as previously 
$$ \Ffi_k = \Ffi_{\mathbf{r},k}+\Ffi_{\i,k}\i + \Ffi_{\j,k}\j + \Ffi_{\k,k}\k, $$
where $\Ffi_{\mathbf{r},k},\Ffi_{\i,k},\Ffi_{\j,k},\Ffi_{\k,k}\in\R^m$. Note that the~second constraint in~\eqref{eq:4_4_02} can be written in the~form 
$$ 	\norm{(z_{\mathbf{r},k},z_{\i,k},z_{\j,k},z_{\k,k})^T}_2\leq t_k\quad\text{for }k\in\{1,\ldots,n\}, $$
where~$t_k$ are positive real numbers such that $\sum\limits_{k=1}^n t_k = t$. Having that, we can rewrite~\eqref{eq:4_4_02} in the~real-valued setup in the~following way: 
\begin{align}\label{eq:4_4_03}
	\argmin\limits_{\tilde{\z}\in\R^n} \C^T\tilde{\z} \quad\text{subject to}\quad	&\tilde{\y}=\tilde{\Fi}\tilde{\z} \\ 
		     					&\text{and}\quad	\norm{(z_{\mathbf{r},k},z_{\i,k},z_{\j,k},z_{\k,k})^T}_2\leq t_k\quad\text{for }k\in\{1,\ldots,n\}, \nonumber
\end{align} 
where 
\begin{align}
\tilde{\z} &= (t_1,z_{\mathbf{r},1},z_{\i,1},z_{\j,1},z_{\k,1},\ldots,t_n,z_{\mathbf{r},n},z_{\i,n},z_{\j,n},z_{\k,n})^T\in\R^{5n}, \label{eq:4_4_04} \\
\C &= (1,0,0,0,0,\ldots,1,0,0,0,0)^T\in\R^{5n}, \label{eq:4_4_05} \\
\tilde{\y} &= (\y_{\mathbf{r}}^T,\y_{\i}^T,\y_{\j}^T,\y_{\k}^T)^T\in\R^{4m},  \label{eq:4_4_06} \\
\tilde{\Fi} &= \left(\begin{array} {ccccccccccc}
\0 & \Ffi_{\mathbf{r},1} & -\Ffi_{\i,1} & -\Ffi_{\j,1} & -\Ffi_{\k,1} & \ldots &
\0 & \Ffi_{\mathbf{r},n} & -\Ffi_{\i,n} & -\Ffi_{\j,n} & -\Ffi_{\k,n} \\
\0 & \Ffi_{\i,1} & \Ffi_{\mathbf{r},1} & -\Ffi_{\k,1} & \Ffi_{\j,1} & \ldots &
\0 & \Ffi_{\i,n} & \Ffi_{\mathbf{r},n} & -\Ffi_{\k,n} & \Ffi_{\j,n} \\
\0 & \Ffi_{\j,1} & \Ffi_{\k,1} & \Ffi_{\mathbf{r},1} & -\Ffi_{\i,1} & \ldots &
\0 & \Ffi_{\j,n} & \Ffi_{\k,n} & \Ffi_{\mathbf{r},n} & -\Ffi_{\i,n} \\
\0 & \Ffi_{\k,1} & -\Ffi_{\j,1} & \Ffi_{\i,1} & \Ffi_{\mathbf{r},1} & \ldots &
\0 & \Ffi_{\k,n} & -\Ffi_{\j,n} & \Ffi_{\i,n} & \Ffi_{\mathbf{r},n} 
\end{array}\right) \label{eq:4_4_07}
\end{align} 
and~$\tilde{\Fi}\in\R^{4m\times 5n}$.

This is a~standard form of the~SOCP, which can be solved using the~SeDuMi toolbox for MATLAB~\cite{SeDuMi}. The~solution 
\begin{equation}
	\tilde{\x}^{\#} = \left(t_1,x_{\mathbf{r},1}^{\#},x_{\i,1}^{\#},x_{\j,1}^{\#},x_{\k,1}^{\#},\ldots,t_n,x_{\mathbf{r},n}^{\#},x_{\i,n}^{\#},x_{\j,n}^{\#},x_{\k,n}^{\#}\right)^T\in\R^{5n}
\end{equation} 
to the problem~\eqref{eq:4_4_03} can easily be expressed as 
\begin{equation} \label{eq:4_4_08}
	\x^{\#} = \left(x_{\mathbf{r},1}^{\#}+x_{\i,1}^{\#}\i+x_{\j,1}^{\#}\j+x_{\k,1}^{\#}\k, \; \ldots, \;x_{\mathbf{r},n}^{\#}+x_{\i,n}^{\#}\i+x_{\j,n}^{\#}\j+x_{\k,n}^{\#}\k\right)\in\H^n,
\end{equation} 
which is the~solution of our original problem~\eqref{l1minexact}.

\renewcommand{\theenumi}{\textbf{\arabic{enumi}}}

The~experiments were carried out in MATLAB R2016a on a~standard PC machine, with Intel(R) Core(TM) i7-4790 CPU (3.60GHz), 16GB RAM and with Microsoft Windows 10~Pro. The~algorithm consisted of the~following steps: 
\begin{enumerate}%[label={\arabic*$^\circ$}]
	\item Fix constants $n=256$ (length of~$\x$) and~$m$ (number of measurements, i.e. length of~$\y$) and generate the~measurement matrix $\Fi\in\H^{m\times n}$ with Gaussian entries sampled from i.i.d. quaternion normal distribution $\norml_{\H}\left(0,\frac{1}{m}\right)$;
	\item Choose the~sparsity $s\leq \frac{m}{2}$ and draw the~support set $S\subseteq\{1,\ldots,n\}$ with $\# S=s$, uniformly at random. Generate a~vector $\x\in\H^n$ such that $\supp\x=S$ with i.i.d. quaternion normal distribution $\norml_{\H}(0,1)$;
	\item \label{alg:3} Compute $\y=\Fi\x\in\H^m$; 
	\item \label{alg:4} Construct vectors $\tilde{\y},\C$ and matrix $\tilde{\Fi}$ as in~\eqref{eq:4_4_04}--\eqref{eq:4_4_07};
	\item \label{alg:5} Call the~SeDuMi toolbox to solve the~problem~\eqref{eq:4_4_03} and calculate the~solution $\tilde{\x}^{\#}$;
	\item \label{alg:6} Compute the solution $\x^{\#}$ using~\eqref{eq:4_4_08} and the~errors of reconstruction (in the~$\ell_1$- and $\ell_2$-norm sense), i.e. $\norm{\x^{\#}-\x}_1$ and~$\norm{\x^{\#}-\x}_2$.
\end{enumerate}

The~experiment was carried out for $m=2,\ldots,64$ and $s=1,\ldots,\frac{m}{2}$. The range of $s$ is not accidental -- it is known in general that the~minimal number~$m$ of measurements needed for the~reconstruction of an~$s$-sparse vector is~$2s$~\cite[Theorem 2.13]{introCS}. For each pair of $(m,s)$ we performed 1000 experiments, saving the~errors of each reconstruction and the~number of perfect reconstructions (the~reconstruction is said to be perfect if $\norm{\x^{\#}-\x}_2\leq 10^{-7}$). For comparison we also repeated this experiment for the~case of $\Fi\in\R^{m\times n}$ and $\x\in\R^{n}$. The~percentage of perfect reconstructions in each case is presented in Fig.~\ref{fig:6_1} and Fig.~\ref{fig:6_2}~(a).

\begin{figure}[h]
	\centerline{
	\epsfig{file=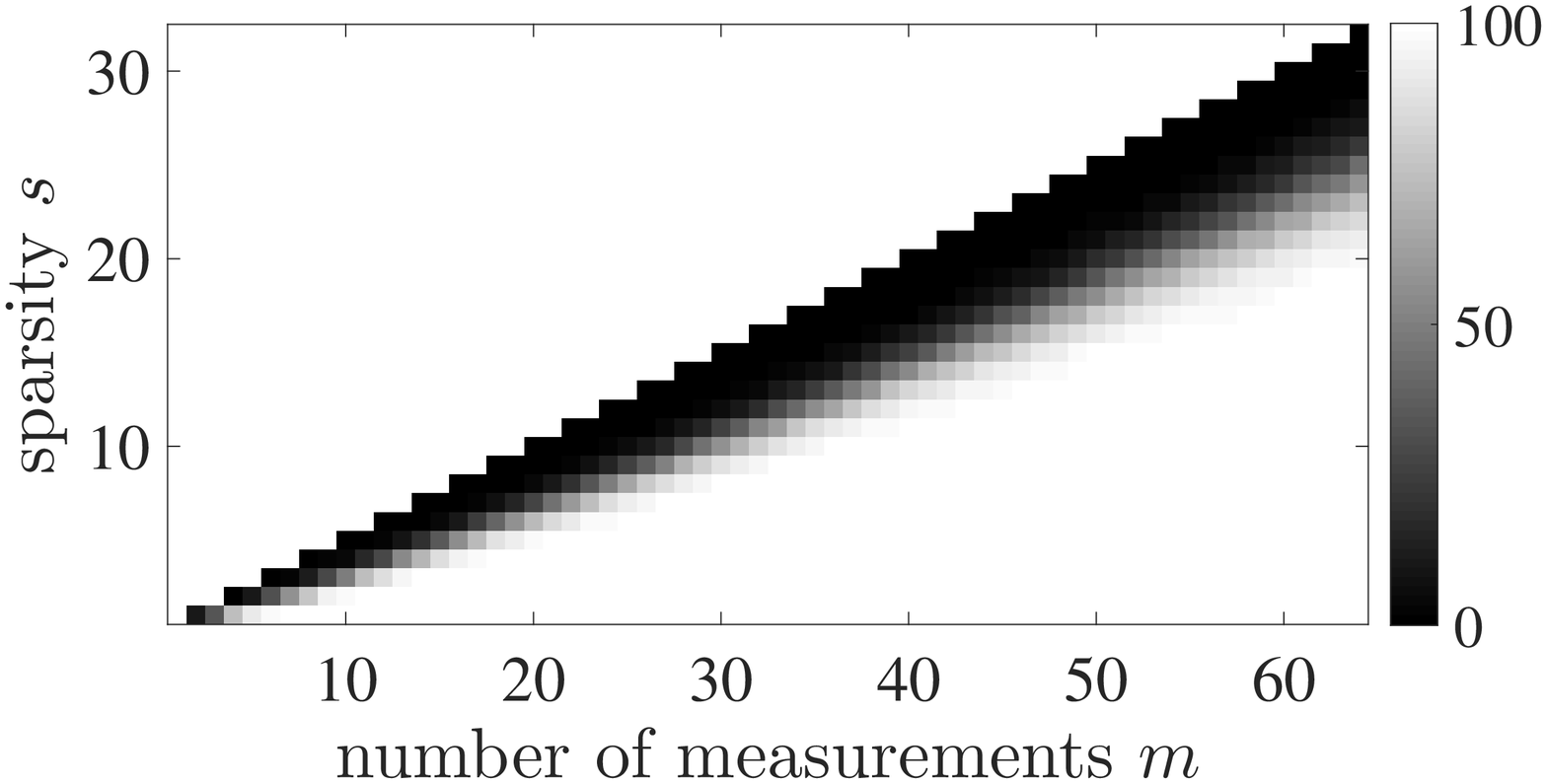,width=0.49\textwidth}
	\epsfig{file=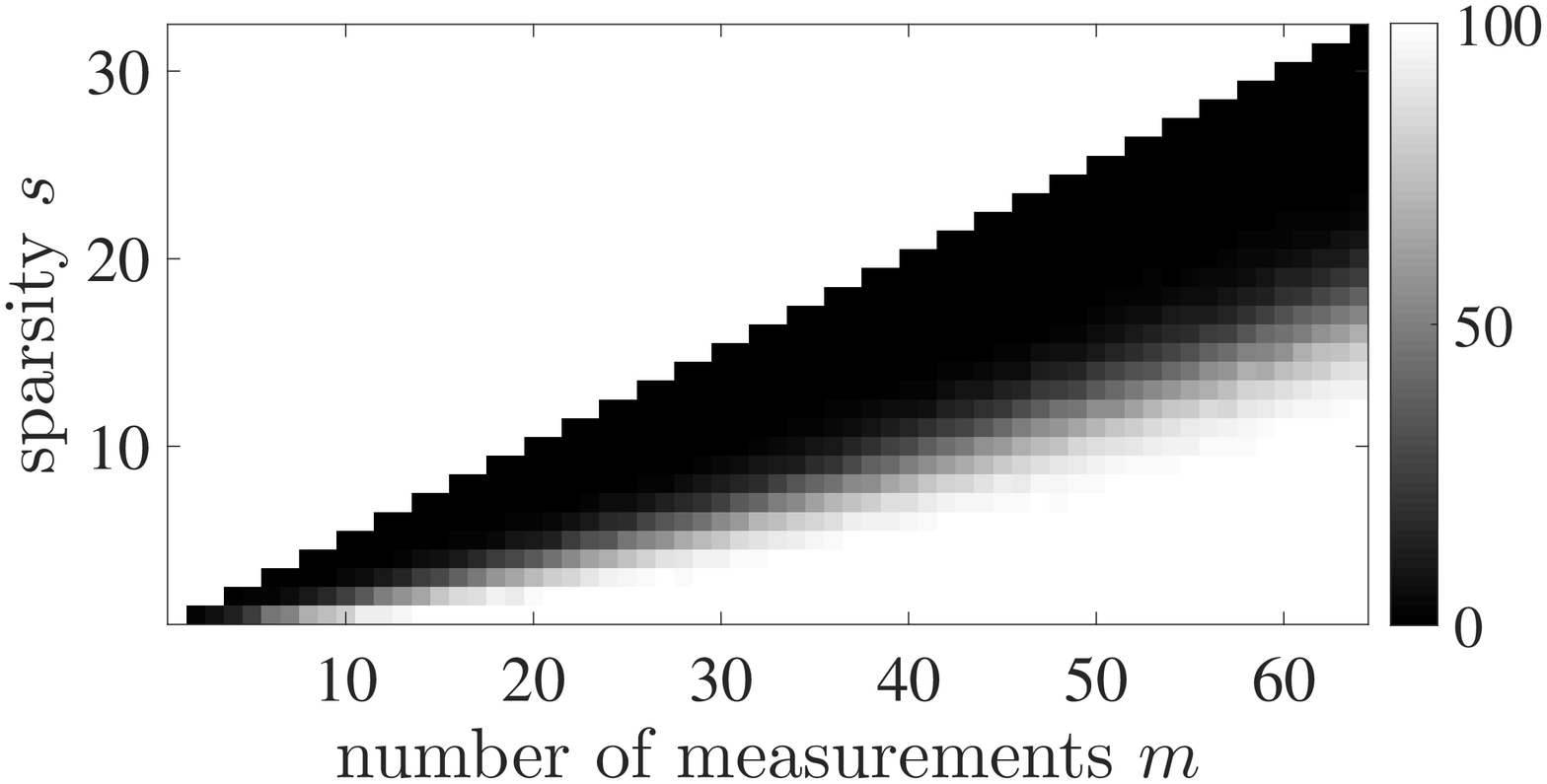,width=0.49\textwidth}
	}
	\begin{minipage}{0.49\textwidth} \centerline{(a) $\Fi\in\H^{m\times n}$} \end{minipage}
	\begin{minipage}{0.49\textwidth} \centerline{(b) $\Fi\in\R^{m\times n}$} \end{minipage}
	
	\caption{Results of the recovery experiment for $n=256$ and different $m$ and $s$. Image intensity stands for the~percentage of perfect reconstructions.}
	\label{fig:6_1}
\end{figure}

\begin{figure}[h]
	\centerline{
	\epsfig{file=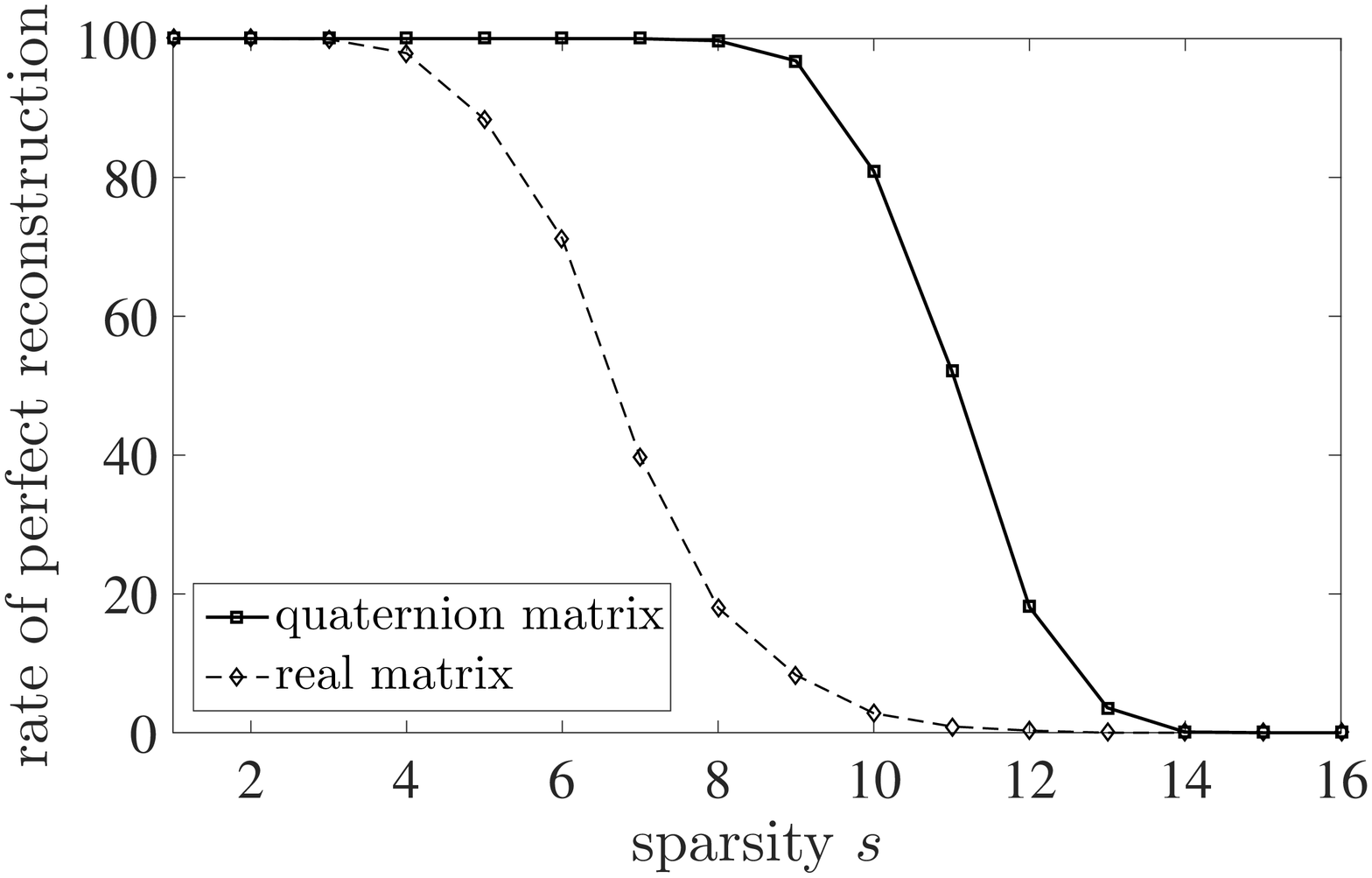,width=0.49\textwidth}
	\epsfig{file=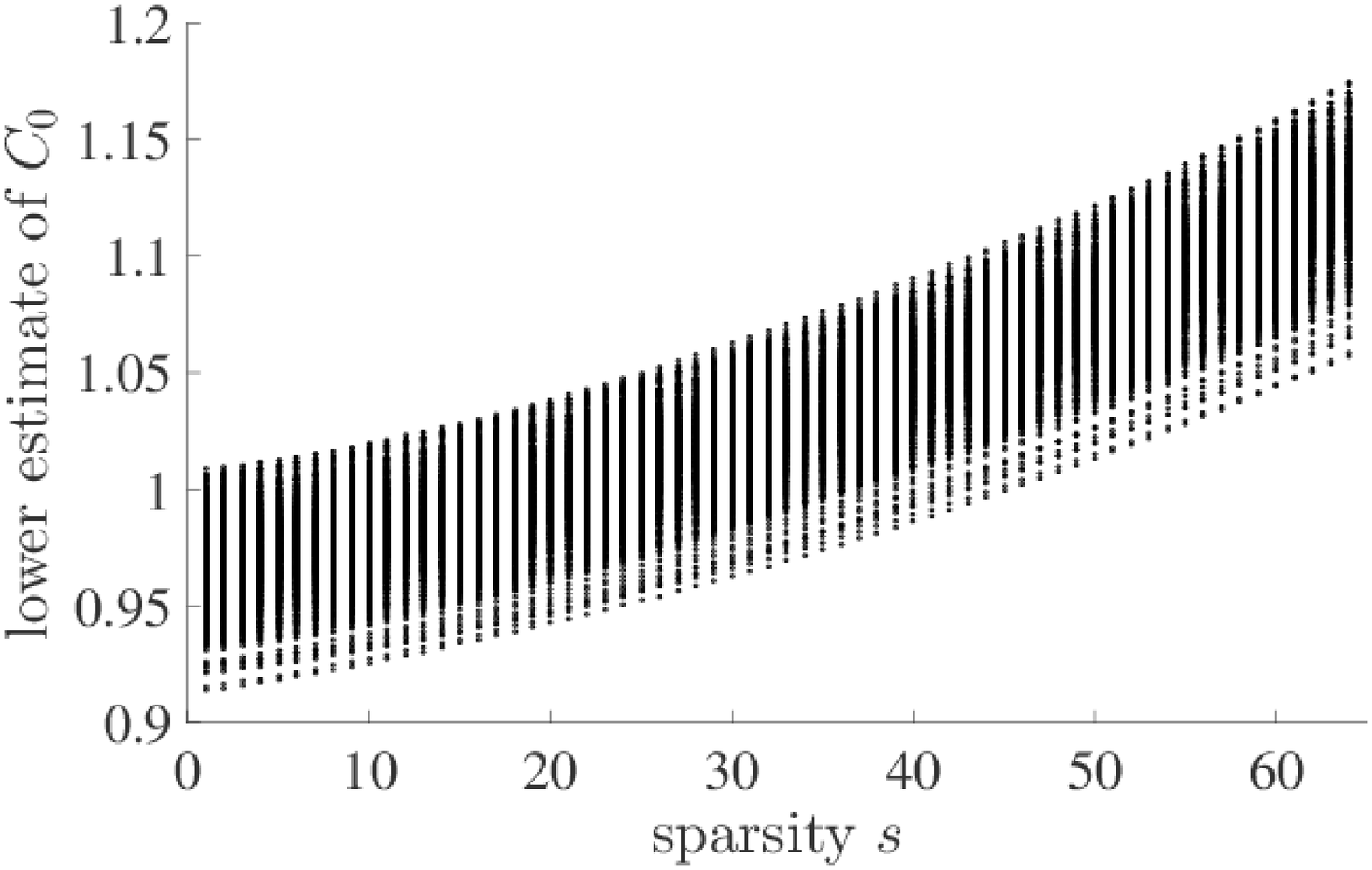,width=0.49\textwidth}
	}
	\begin{minipage}{0.49\textwidth} \centerline{(a)} \end{minipage}
	\begin{minipage}{0.49\textwidth} \centerline{(b)} \end{minipage}
	
	\caption{(a) Comparison of the~recovery experiment results for $n=256$, $m=32$ and different values of~$s$. (b) Lower estimate of the~constant $C_0$ in~Corollary~\ref{l1mincor} obtained from the~inequality~\eqref{exact-ineq1} for $n=256$ and $m=32$.}
	\label{fig:6_2}
\end{figure}

Fig.~\ref{fig:6_1}~(a) presents dependence of the~perfect recovery percentage on the~number of measurements~$m$ and sparsity~$s$ in the~quaternion case. We see that simulations confirm our theoretical~considerations. Fig.~\ref{fig:6_1}~(b) shows the~same results for the~real case, i.e. $\Fi\in\R^{m\times n}$ and $\x\in\R^n$. Note that in the~first experiment for $m=32$ and $s\leq 9$ the~recovery rate is greater than 95\%, the~same holds for $m=64$ and $s\leq 20$. It is also worth noticing that the~results for coresponding pairs $(m,s)$ are much better in the~quaternion setup than in the~real-valued case (see~Fig.~\ref{fig:6_2}(a)). We explain this phenomenon in~{\cite[Lemma~3.1]{bbRIP}}, namely, we show that for a~fixed vector~$\x\in\H^m$ and the~ensemble of quaternion Gaussian random matrices $\Fi\in\H^{m\times n}$, the~ratio random variable $\frac{\norm{\Fi\x}_2^2}{\norm{\x}_2^2}$ has distribution $\Gamma(2m,2m)$, i.e. its variance equals $\frac{1}{2m}$, which is four times smaller than in the~case of a~real vector and real Gaussian matrices of the~same size. In other words, a~quaternion Gaussian random matrix statistically has smaller restricted isometry constant than its real counterpart.

\medskip
We also performed another experiment illustrating the~approximated reconstruction of non-sparse quaternion vectors from the~exact data -- as stated in Corollary~\ref{l1mincor}. We fixed constants $n=256$ and $m=32$ and generated the measurement matrix $\Fi\in\H^{m\times n}$ with random entries sampled from i.i.d. quaternion normal distribution and $1000$ arbitrary vectors $\x\in\H^n$ with standard Gaussian random quaternion entries ($\sigma^2=1$), without assuming their sparsity. The~above-described algorithm (steps \ref{alg:3}.--\ref{alg:6}.) was applied to approximately reconstruct the~vectors. We used the~reconstruction errors $\norm{\x^{\#}-\x}_1$ to obtain a~lower bound on the~constant $C_0$ as a~function of~$s$, for $s=1,\ldots,64$, using inequality~\eqref{exact-ineq1}, i.e. 
$$ C_0\geq \frac{\norm{\x^{\#}-\x}_1}{\norm{\x-\x_s}_1}, $$
where $\x_s$ denotes the best $s$-sparse approximation of $\x$. The~results of this experiment are shown in Fig.~\ref{fig:6_2}~(b) in the~form of a~scatter plot -- each point represents a~lower estimate of~$C_0$ for one vector $\x$ and sparsity $s$. We see in particular that -- as expected -- the~dependence on $s$ is monotone.

\section{Conclusions}

The results of this article, together with aforementioned~\cite{bbRIP}, form a~theoretical background of the~classical compressed sensing methods in the~quaternion algebra. We extended the~fundamental result of this theory to the~full quaternion case, namely we proved that if a~quaternion measurement matrix satisfies the~RIP with a~sufficiently small constant, then it is possible to reconstruct sparse quaternion signals from a~small number of their measurements via $\ell_1$-norm minimization. We also estimated the~error of the~approximated reconstruction of a~non-sparse quaternion signal from exact and noisy data. This improves our previous result for real measurement matrices and sparse quaternion vectors~\cite{bb} and explains success of various numerical experiments in the~quaternion setup~\cite{barthelemy2015,hawes2014,l1minqs}.

There are several possibilities of further research in this field -- both in theoretical and applied directions. Among others:\\
-- further refinements of the~main results in the~quaternion algebra or their extensions to different algebraic structures,\\
-- search for other than Gaussian quaternion matrices satisfying the RIP,\\
-- adjust reconstruction algorithms to quaternions.\\
-- applications of the~theory in practice.\\
In view of numerous articles concerning quaternion signal processing published in the~last decade, we expect that this new branch of compressed sensing will attract attention of even more researchers and considerably develop.

\bigskip\bigskip
\noindent
\textbf{Acknowledgments}

The~research was supported in part by WUT grant No. 504/01861/1120. The~work conducted by the~second author
%{\L}ukasz B{\l}aszczyk 
was supported by a scholarship from Fundacja Wspierania Rozwoju Radiokomunikacji i Technik Multimedialnych.

\bigskip

\end{document}